\newtheorem{Theorem}{Theorem}[section]
\newtheorem{Proposition}[Theorem]{Proposition}
\newtheorem{Lemma}[Theorem]{Lemma}
\newtheorem{Hypothesis}[Theorem]{Hypothesis}
\def\Q{\mathbb Q}
\def\R{\mathbb R}
\def\N{\mathbb N}
\def\E{\mathbb E}
\def\P{\mathbb P}
\def\eps{\varepsilon}
\newcommand{\esssup}{\operatorname{ess\,sup}}
\newcommand{\essinf}{\operatorname{ess\,inf}}
\newcommand{\one}{1\!\!\!\;\mathrm{l}}
\title[Law of the minimum]{\bf On  the law of the minimum of the solutions to a class of unidimensional SDEs}\date{}
\author[G. Da Prato]{Giuseppe Da Prato}
\address{Scuola Normale Superiore\\
Piazza dei Cavalieri, 7\\ 
56126 Pisa, Italy}
\email{giuseppe.daprato@sns.it}
\author[A. Lunardi]{Alessandra Lunardi}
\address{
Dipartimento di Scienze Matematiche, Fisiche e Informatiche\\
Universit\`a di Parma\\
Parco Area delle Scienze, 53/A\\
43124 Parma, Italy}
\email{alessandra.lunardi@unipr.it}
\author[L. Tubaro]{Luciano Tubaro}
\address{
c/o Dipartimento di Matematica\\
Universit\`a di Trento\\
Via Sommarive 14\\
38123 Povo, Italy}
\email{tubaro@science.unitn.it}
\subjclass[2010]{60H10, 60G30, 28C20, 60H07}
\keywords{Stochastic ODEs, densities of laws, Malliavin Calculus}
\begin{document}

 \begin{abstract}  
We prove that the law of the minimum $m:=\min_{t\in[0,1]} \xi(t)$ 
of the solution  $\xi$ to a one-dimensional ODE with good nonlinearity has continuous density with respect to the Lebesgue measure. As a byproduct of the procedure, we show that the sets  $ \{ x\in C([0,1]):\; \min x > r\}$ have finite perimeter with respect to the law $\nu$ of the solution $\xi(\cdot)$  in $L^2(0,1)$. 
 \end{abstract}

 \maketitle
 
\section{Introduction}
Let $B(\cdot)$ be a standard Brownian motion in a probability space $(\Omega,\mathcal F,\P)$. We consider a one dimensional SDE, 
\begin{equation}
\label{e1.4}
d\xi(t)=b(\xi)dt+dB(t),\quad \xi(0)=0,
\end{equation}
where $b\in C^2_b(\R)$, the space of the bounded twice differentiable functions with bounded first and second derivative. It is well known that the trajectories $t\mapsto \xi(t)(\omega)$ are continuos for $\P$-a.e. $\omega$.  
The main aim of this note is to prove the following theorem. 

\begin{Theorem}
\label{t1.1}
The law of
$$
m:=\min_{t\in[0,1]} \xi(t)
$$
is absolutely continuous with respect to the Lebesgue measure in $(-\infty, 0)$, with a continuous density.
 \end{Theorem}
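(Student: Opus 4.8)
The plan is to combine Malliavin calculus with a smooth, finite-dimensional approximation of the minimum functional, and then to control the approximate densities uniformly by exploiting a strict positivity of the Malliavin derivative of $\xi$.

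First I would record the basic facts. Since $B$ oscillates around $0$ immediately after $t=0$, one has $m<0$ $\P$-a.s., so it suffices to produce a continuous density on $(-\infty,0)$. The solution of \eqref{e1.4} belongs to the Malliavin--Sobolev space $\mathbb D^{1,\infty}$, and differentiating the equation gives, for $0\le s\le t\le1$,
\begin{equation*}
D_s\xi(t)=\exp\Big(\int_s^t b'(\xi(r))\,dr\Big),\qquad D_s\xi(t)=0\ \text{ for }s>t,
\end{equation*}
so that $e^{-\|b'\|_\infty}\le D_s\xi(t)\le e^{\|b'\|_\infty}$. The essential point is the \emph{strict positivity} of $D_s\xi(t)$: in the Malliavin sense $\xi$ is a monotone perturbation of the Wiener path. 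Since $\nu$ is equivalent to the Wiener measure (Girsanov, $b$ bounded), the minimizer $\tau:=\operatorname{argmin}_{[0,1]}\xi$ is $\P$-a.s.\ unique, exactly as for Brownian motion, and then $m\in\mathbb D^{1,\infty}$ with $D_s m=\mathbf 1_{\{s\le\tau\}}\exp\!\big(\int_s^\tau b'(\xi(r))dr\big)$, whence
\begin{equation*}
\|Dm\|_{L^2(0,1)}^2\ \ge\ e^{-2\|b'\|_\infty}\,\tau .
\end{equation*}

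Secondly I would approximate $m$ from above by functionals which are differentiable in the Malliavin sense, for instance the discrete minima $m_n:=\min\{0,\xi(1/n),\dots,\xi(1)\}$ (a Lipschitz function of the vector $(\xi(j/n))_j$, whose joint law has a smooth strictly positive density, being a product of transition densities of the diffusion) or the soft-minima $m_\eps:=-\eps\log\int_0^1 e^{-\xi(t)/\eps}dt$. For each approximant one obtains, on $(-\infty,0)$, an explicit continuous density $p_n$: either from the classical finite-dimensional surface-integral formula, or from the Malliavin integration-by-parts identity $p_n(r)=\E\big[\mathbf 1_{\{m_n>r\}}\,\delta\big(\varphi_r\,Dm_n/\|Dm_n\|^2_{L^2(0,1)}\big)\big]$ with a cut-off $\varphi_r$ supported where $m_n$ is safely below $r$.

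The core of the argument is to make these density formulas yield bounds that are \emph{uniform} in the approximation parameter and locally equicontinuous in $r$ on $(-\infty,0)$. Here the positivity enters quantitatively: on $\{m_n<-\delta\}$ the minimizer occurs beyond the first passage time $\sigma_\delta:=\inf\{t\in[0,1]:\xi(t)=-\delta\}$, so $\|Dm_n\|_{L^2(0,1)}^2\ge e^{-2\|b'\|_\infty}\sigma_\delta$, and $\sigma_\delta$ has finite negative moments of every order, with bounds depending only on $\delta$, by comparison with Brownian motion; this gives $\sup_n\E[\|Dm_n\|^{-2p}_{L^2(0,1)}\mathbf 1_{\{m_n<-\delta\}}]<\infty$. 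Together with uniform control of the Malliavin--Sobolev norms of $m_n$ and of the divergence terms against the localizing cut-offs, one would deduce $\sup_n\|p_n\|_{C^{0,\alpha}(K)}<\infty$ for every compact $K\subset(-\infty,0)$. Since $m_n\downarrow m$, the laws converge, $p_n\to p$ locally uniformly, and $m$ has the continuous density $p$ on $(-\infty,0)$. The same estimates bound the surface measure of $\{\min x=r\}$, hence the $\nu$-perimeter of $\{x:\min x>r\}$, by a constant times $p(r)$ --- finite for every $r<0$ --- which is the announced byproduct.

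The main obstacle is precisely this uniform/equicontinuous control: $m$ is merely Lipschitz, hence not in $\mathbb D^{2,p}$, so no integration-by-parts formula holds for $m$ directly, and everything must be carried out on the approximants with non-degenerating constants. In particular, taming the divergence $\delta(\varphi_r\,Dm_\eps/\|Dm_\eps\|^2)$ forces one to handle the second Malliavin derivative of the smoothing, which degenerates (like $\eps^{-1}$ for the soft-min), or equivalently to control $n$-dimensional surface integrals uniformly as $n\to\infty$; the one-sided bound $D_s\xi(t)>0$ together with the negative-moment estimate for first hitting times is the only available leverage, and making it sufficient is the heart of the proof.
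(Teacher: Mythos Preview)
Your outline is plausible but takes a route quite different from the paper's, and the difficulty you yourself flag at the end is real and not disposed of. The paper avoids it entirely by a change of reference measure: via Girsanov and It\^o's formula, the law $\nu$ of $\xi(\cdot)$ on $C([0,1])$ equals $\Psi\,\P^W$ with the explicit weight
\[
\Psi(x)=\exp\Big(v(x(1))-\tfrac12\int_0^1 b(x(s))^2\,ds-\tfrac12\int_0^1 b'(x(s))\,ds\Big),\qquad v'=b,
\]
and one checks $\Psi\in W^{1,p}(E,\P^W)$ for every $p<\infty$. Thus $\P(m\in[a,b])=\int_{\{g\in[a,b]\}}\Psi\,d\P^W$ with $g(x)=\min x$, and the whole problem becomes: show $r\mapsto\int_{\{g\ge r\}}\Psi\,dN_Q$ is $C^1$ on $(-\infty,0)$. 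This follows from the surface-integral machinery of \cite{BoDaTu} (their ``local Malliavin condition'' is known for $g=\min$ under the Wiener measure on $L^2(0,1)$), applied once with the smooth weight $\Psi$. All the irregularity coming from the drift $b$ is absorbed into $\Psi$, which is as regular as one likes; the min functional is handled only in the Gaussian setting, where the needed estimates are already available. No approximation of $m$, no uniform control of divergences of approximants, is needed.

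By contrast, your direct Malliavin approach keeps the non-Gaussian reference measure and must therefore differentiate through approximations $m_n$ (or $m_\eps$) of the non-smooth functional $m$. The lower bound $\|Dm_n\|^2\ge c\,\sigma_\delta$ and the negative moments of $\sigma_\delta$ are correct and do control the \emph{denominator} in the integration-by-parts representation, but the Skorokhod divergence $\delta(\varphi_r\,Dm_n/\|Dm_n\|^2)$ also involves $D^2 m_n$ (or, for the soft-min, a term of order $\eps^{-1}$), and you give no mechanism to prevent this from blowing up as $n\to\infty$ (resp.\ $\eps\to0$). Saying that ``the one-sided bound $D_s\xi(t)>0$ together with the negative-moment estimate is the only available leverage'' is an honest admission that the proof is not closed: positivity of $D\xi$ controls first-order quantities, not the second-order ones generated by the divergence. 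The paper's Girsanov reduction is precisely the device that removes this obstacle, trading a hard non-Gaussian Malliavin computation for a soft weighted-Gaussian one.
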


The result is well known in the case $b\equiv 0$, where $\xi(t) = B(t)$ for every $t$ and the law of $m$ is given by (e.g.,  \cite[Thm. 6.9]{Schilling})
\begin{equation}
\label{e1.3}
(\P\circ m^{-1})(dr)= \sqrt{\frac{2}{\pi }}\;e^{-\frac{r^2}{2}}\one_{(-\infty,0]}(r) dr. 
\end{equation}

In the case of general $b$, we proceed in two steps. As a first step, 
using the Girsanov theorem, we show that the law  $\nu$ of $\xi(\cdot)$ in $C([0,1])$  is absolutely continuous with respect to the Wiener measure $\P^W$ with a smooth (unbounded) density $\Psi$. 

In the second step we use  the construction of surface integrals of  \cite{BoDaTu} for Gaussian measures in Hilbert spaces. Here the Hilbert space is $L^2(0,1)$, still endowed with the Wiener measure, and we play with the fact that the Wiener measure on $C([0,1])$ is just the restriction of the Wiener measure on the Borel sets of $L^2(0,1)$ to the Borel sets of $C([0,1])$. Such construction yields that  the density of $\Psi \P^W \circ g^{-1}$, with $g(x) = \min x$, is continuous, provided that $\Psi $ belongs to some Sobolev space 
$W^{1,p}(C([0,1]), \P^W)$ for some $p>1$. Our density $\Psi$ is shown to belong to all spaces $W^{1,p}(C([0,1]), \P^W)$ with $p\in[1, +\infty)$, and this allows to conclude. 

In the last section we use the previous results  to show that for every $r<0$ the set $ \{ x\in C([0,1]):\; \min x > r\}$ has finite perimeter with respect to the measure $\nu$. We remark that for $r\geq 0$ the question is not relevant, since such a set is $\nu$-negligible. This gives an example of a nontrivial finite perimeter set with respect to a non-Gaussian (in general,  not log-concave) differentiable measure in an infinite dimensional space.

\section{The main result}

\subsection{Notation and generalities}\label{generalities}

For the general theory of Gaussian measures we refer to \cite{Boga}. Here we recall just the notation used in this paper.

Let  $X$ be  a separable Banach space,  with norm $\|\cdot\|$ (and scalar product $\langle \cdot, \cdot \rangle$, if it is a Hilbert space). The $\sigma$-algebra of the Borel sets in $X$ is denoted by ${\mathcal B}(X)$. 

We consider a centered  Gaussian measure $\mu$ in ${\mathcal B}(X)$, and we denote by $H$  (or by $H_X$, to avoid confusion when different Banach spaces are considered) the corresponding Cameron-Martin space. It is a Hilbert space continuously embedded in $X$, whose scalar product  is denoted by  $\langle h, k\rangle_{H}$. 
If $X$ is a Hilbert space  and $Q$ is the covariance of $\mu$, the Cameron-Martin space coincides with $Q^{1/2}(X)$, and its norm is given by  $\|h\|_{H} = \|Q^{-1/2}h\|$.  
% If some confusion may arise, the Cameron-Martin space is denoted by $H_X$. 

$C^1_b(X)$ denotes the space of all Fr\'echet differentiable $\varphi:X\mapsto \R$, with continuous Fr\'echet derivative $\varphi '$.  For every $h\in H$ there exists a unique element  $\hat{h} \in \overline{X^*}$ (the closure of $X^*$ in $L^2(X, \mu)$) such that 
$$\int_X \frac{\partial \varphi}{\partial h}(x) \mu(dx) = \int_X \varphi(x)\hat{h}(x)\mu(dx), \quad \varphi \in C^1_b(X). $$
Every $\hat{h}$ is a Gaussian random variable,  so that it belongs to $L^p(X, \mu)$ for every $p\in [1, +\infty)$. The Cameron-Martin space is isometric to 
$ \overline{X^*}$, since  $\langle h, k\rangle_{H} := \langle \hat{h}, \hat{k}\rangle_{L^2(X, \mu)}$ for every $h$, $k\in H$. 

If $ \varphi \in C^1_b(X)$, for  every $x\in X$  the mapping $h\mapsto \varphi'(x)(h)$ belongs to $H^*$, and therefore there exists a unique $y\in H$ such that 
$\varphi'(x)(h) = \langle h, y\rangle_{H}$. Such $y$ is denoted by $\nabla_H\varphi(x)$. For every $p\in [1, +\infty)$ the operator $\nabla_H : C^1_b(X)\mapsto L^p(X, \nu;X)$, considered  as an unbounded operator in the space $L^p(X, \mu)$, is closable in $L^p(X, \mu)$. Its closure is still denoted by $\nabla_H$, and the domain of the closure is denoted by $W^{1,p}(X, \mu)$.

If $X$ is a Hilbert space, the symbol $\nabla$ denotes the usual gradient, namely if $ \varphi$ is Fr\'echet differentiable at  $x\in X$, $\nabla \varphi(x)$ is the unique $z\in X$ such that $\varphi'(x)(h) = \langle h,z\rangle $ for every $h\in X$. 
For every $p\in [1, +\infty)$ the operator $Q^{1/2}\nabla  : C^1_b(X)\mapsto L^p(X, \nu;X)$, considered as an unbounded operator in the space $L^p(X, \mu)$, is closable in $L^p(X, \mu)$. Denoting by  $M_p$ its closure, it turns out that $D(M_p) = W^{1,p}(X, \mu)$, and $\nabla_H u = Q^{1/2}M_p u$, for every $u\in 
W^{1,p}(X, \mu)$.  
% Eventually, we denote by $M_p^*: D(M_p^*)\subset L^{p'}(X, \mu; X) \mapsto L^{p'}(X, \mu)$  the adjoint operator of $M_p$ (as usual, $p'$ is the conjugate exponent of $p$). 
 
 \vspace{3mm}

In this paper we shall consider the spaces 
$$X:=L^2(0,1), \quad E:= C([0,1])$$
 endowed with the Wiener measure. Usually, the Wiener measure $\P^W$ is considered  in ${\mathcal B}(E)$; equivalent  constructions of it are e.g. in \cite[p. 54-56]{Boga}. In particular, for every Brownian motion $\{W(t): \;0\leq t\leq 1\}$ in any probability space $(\Omega, \mathcal F, \P)$, the image measure $\P \circ W(\cdot)^{-1}$ in ${\mathcal B}(E)$ coincides with $\P^W$. $\P^W$ is centered, Gaussian, and it concentrated in $E_0:=\{ f\in E:\; f(0)=0\}$; the restriction of $\P^W$ to ${\mathcal B}(E _0)$ is a centered nondegenerate Gaussian measure in ${\mathcal B}(E _0)$. 
 
However, in the following we shall use some results about Gaussian measures in Hilbert spaces, and therefore it is convenient to extend $\P^W$ to ${\mathcal B}(X)$. 
Denoting by $i$  the natural immersion  $i:E\mapsto X$, 
the image  measure   $\P^W\circ i^{-1}$ in ${\mathcal B}(X)$ turns out to be   the Gaussian measure $N_Q$ with mean $0$ and covariance operator $Q$ given by 
$$(Qx)(t)=\int_0^1\min\{t,s\}\,x(s)\,ds,\quad x\in  L^2(0,1),\;t\in (0, 1). $$
Since $Q$ is one to one, $N_Q$ is nondegenerate. Moreover,   $i(B)$ is a Borel set in $X$   and $\P^W(B) = N_Q(i(B))$,  for every $B\in {\mathcal B}(E)$; in particular, $N_Q(i(E))=1$. 

As usual, we shall neglect  the immersion $i$ and we shall write $E\subset X$, $N_Q(B) = \P^W(B)$ for  any $B\in {\mathcal B}(E)$. In this sense, $N_Q$ is an extension of $\P^W$ to ${\mathcal B}(X)$.

The corresponding Cameron-Martin spaces $H_E$, $H_X= Q^{1/2}(X)$ do coincide; they are equal to 
$\{ x\in H^1(0,1):\; x(0)=0\}$ (e.g., \cite[Lemmas 2.3.14, 3.2.2]{Boga}). By the characterization of Sobolev spaces through weak derivatives along Cameron-Martin directions, 
(e.g. \cite[Lemma 5.4.7, Theorem 5.7.2]{Boga}), it follows that the Sobolev spaces $W^{1,p}( E, \P^W)$ and $W^{1,p}( X,N_Q)$ coincide for every $p\in [1, +\infty)$.

%%%%%%%%%%%%%%%%%%%%%%%%%%%%%%%%%%%%%%%%%%%%%%%%%%%%%%%%%%%%%%%%%%

\subsection{Continuity of the law of $m$}

Set
\begin{equation}
\label{e2.1}
q(t)=\exp \bigg(-\frac12\int_0^tb(\xi(s))^2ds-\int_0^t b(\xi(s))\,dB(s)\bigg),\quad t\geq 0, 
 \end{equation}
 and
 define
$$
\Q(d\omega)=q(1)(\omega)\,\P(d\omega).
 $$
  Since $\E_\P(q(t))=1$ for every $t$, by the Girsanov Theorem (e.g. \cite[Thm. 17.8]{Schilling}), 
 $\{\xi(t): \; t\in [0,1]\}$  is a Brownian motion in $(\Omega,\mathcal F,\Q)$ (notice that  $\Q$
 is a probability measure in $(\Omega,\mathcal F)$). 
  
 Therefore
$\Q\circ (\xi(\cdot)^{-1})=\P^W$
and so, for any $\varphi :E \to\R$ bounded and Borel measurable we have
\begin{equation}
\label{e2.2}
\int_\Omega \varphi (\xi(\cdot)(\omega))\,\Q(d\omega)=\E_\Q(\varphi (\xi(\cdot)))=\int_{E}\varphi (x)\,P^W(dx). 
 \end{equation}
 Since
$$\E_\P(\varphi (\xi(\cdot)))=\E_\Q(\varphi (\xi(\cdot))/q(1) ),$$
we have
\begin{equation}
\label{e2.3}
\int_\Omega \varphi (\xi(\cdot))\,d\P= \int_\Omega \varphi (\xi(\cdot))\,\exp(1/q(1))\,d\Q . 
 \end{equation}

 We define the probability measure $\nu_E$  in ${\mathcal B} (E)$ by 
 \begin{equation}
 \label{eq:nu}
 \nu_E:= \P \circ \xi(\cdot)^{-1}. 
 \end{equation}

\begin{Proposition}
\label{prop:2.1}
We have
\begin{equation}
\label{e2.4}
q(1)=\exp \bigg(-v(\xi(1))+\frac12\int_0^1b^2(\xi(s))ds +\frac12\,\int_0^1b'(\xi(s))ds \bigg),
 \end{equation}
where
\begin{equation}
\label{v}
 v(\eta)=\int_0^\eta b(r)dr,\quad \eta\in \R.
\end{equation}
Therefore, the density of $\nu_E$ with respect to $\P^W$ is the   function
\begin{equation}
\label{psi}
\Psi(x) := \exp\bigg( v(x(1)) -\frac12\int_0^1b^2(x(s))ds -\frac12\,\int_0^1b'(x(s))ds\bigg), \quad x\in E, 
\end{equation}
which belongs to $C^1(E) \cap W^{1,p}(E, \P^W)$ for every $p\in [1, +\infty)$. 
\end{Proposition}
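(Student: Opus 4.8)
The plan is to derive \eqref{e2.4} from It\^o's formula, read off $\Psi$ from the Girsanov computation of this subsection, and then establish the two regularity claims ($\Psi\in C^1(E)$ and $\Psi\in W^{1,p}(E,\P^W)$ for all $p$) separately. For \eqref{e2.4}: since $v\in C^2(\R)$ with $v'=b$ and $v''=b'$, applying It\^o's formula to $t\mapsto v(\xi(t))$ along $d\xi=b(\xi)\,dt+dB$ gives
\[
v(\xi(1))=v(\xi(0))+\int_0^1 b^2(\xi(s))\,ds+\int_0^1 b(\xi(s))\,dB(s)+\frac12\int_0^1 b'(\xi(s))\,ds .
\]
Because $\xi(0)=0$ we have $v(\xi(0))=0$; solving for the It\^o integral and substituting into \eqref{e2.1} at $t=1$ yields \eqref{e2.4} after cancellation, equivalently $q(1)=\Psi(\xi(\cdot))^{-1}$ with $\Psi$ as in \eqref{psi}. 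Then for every bounded Borel $\varphi$, from $\E_\P(\varphi(\xi(\cdot)))=\E_\Q\big(\Psi(\xi(\cdot))\,\varphi(\xi(\cdot))\big)$ together with $\Q\circ\xi(\cdot)^{-1}=\P^W$ one gets $\int_E\varphi\,d\nu_E=\int_E\varphi\,\Psi\,d\P^W$, i.e.\ $\nu_E=\Psi\,\P^W$.

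For $\Psi\in C^1(E)$, write $\Psi=e^\Phi$ with $\Phi(x)=v(x(1))-\tfrac12\int_0^1 b^2(x(s))\,ds-\tfrac12\int_0^1 b'(x(s))\,ds$. The first summand is $v$ composed with the bounded linear functional $x\mapsto x(1)$, hence of class $C^1$ with derivative $h\mapsto b(x(1))h(1)$. Each of the other summands is $x\mapsto\int_0^1 g(x(s))\,ds$ with $g\in\{b^2,b'\}$, so $g\in C^1_b$: a dominated-convergence argument (the incremental ratios are bounded by $\|g'\|_\infty|h(\cdot)|$) shows the G\^ateaux derivative exists and equals $h\mapsto\int_0^1 g'(x(s))h(s)\,ds$, and a second dominated-convergence argument (continuity and boundedness of $g'$) shows $x\mapsto g'(x(\cdot))$ is continuous into $L^1(0,1)$, hence the derivative is continuous into $E^*$; since a G\^ateaux-differentiable map with continuous derivative is of class $C^1$, each summand, $\Phi$, and $\Psi=e^\Phi$ are in $C^1(E)$, with $\nabla_H\Psi(x)=\Psi(x)\nabla_H\Phi(x)$. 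Here $\Psi$ is unbounded, since $|v(x(1))|\le\|b\|_\infty|x(1)|$ can be arbitrarily large.

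For $\Psi\in W^{1,p}(E,\P^W)$ with $p\in[1,\infty)$, two ingredients are needed. First, $\Psi\in L^p$ for every $p<\infty$: the two integral terms in $\Phi$ contribute at most a constant (as $-\tfrac12\int_0^1 b^2\le0$ and $|\tfrac12\int_0^1 b'|\le\tfrac12\|b'\|_\infty$), so $\Psi(x)\le C\,e^{\|b\|_\infty|x(1)|}$, and under $\P^W$ the evaluation $x\mapsto x(1)$ is a standard Gaussian, which has finite exponential moments of all orders. Second, a \emph{uniform} bound on $\|\nabla_H\Phi(x)\|_H$: on $H=\{h\in H^1(0,1):h(0)=0\}$ with $\langle h,k\rangle_H=\int_0^1 h'k'\,ds$ one has $h(1)=\langle h,k_1\rangle_H$ with $k_1(t)=t$, and $\int_0^1\phi(s)h(s)\,ds=\langle h,k_\phi\rangle_H$ with $k_\phi'(u)=\int_u^1\phi(s)\,ds$, whence $\|k_\phi\|_H\le\|\phi\|_\infty$; therefore
\[
\nabla_H\Phi(x)=b(x(1))\,k_1-k_{b(x(\cdot))b'(x(\cdot))}-\tfrac12\,k_{b''(x(\cdot))},
\qquad \|\nabla_H\Phi(x)\|_H\le\|b\|_\infty+\|b\|_\infty\|b'\|_\infty+\tfrac12\|b''\|_\infty=:C_0,
\]
for all $x$. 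Consequently $\|\nabla_H\Psi(x)\|_H\le C_0\,\Psi(x)\in L^p$.

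To conclude that $\Psi$ lies in the Sobolev space — the domain of the $L^p$-closure of $\nabla_H$ on $C^1_b(E)$ — choose $\phi_n\in C^\infty(\R)$ bounded with $\phi_n(r)=r$ for $|r|\le n$ and $0\le\phi_n'\le1$, and set $\Psi_n:=\phi_n\circ\Psi\in C^1(E)$. Then $\Psi_n$ is bounded, and $\nabla_H\Psi_n=\phi_n'(\Psi)\,\nabla_H\Psi$ satisfies $\|\nabla_H\Psi_n(x)\|_H\le C_0(n+1)$ wherever $\phi_n'(\Psi(x))\neq0$ and vanishes elsewhere, so $\Psi_n\in C^1_b(E)\subset W^{1,p}$. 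Since $|\Psi_n|\le\Psi$, $\|\nabla_H\Psi_n\|_H\le C_0\Psi$, and $\Psi_n\to\Psi$, $\nabla_H\Psi_n\to\nabla_H\Psi$ pointwise, dominated convergence gives $\Psi_n\to\Psi$ in $L^p$ and $\nabla_H\Psi_n\to\nabla_H\Psi$ in $L^p(E;H)$, and closedness of $\nabla_H$ yields $\Psi\in W^{1,p}(E,\P^W)$. I expect the main obstacle to be precisely this last closure step: it is the uniform bound $\|\nabla_H\Phi\|_H\le C_0$ — special to the one-dimensional setting, where $h(1)$ and $\int_0^1\phi h$ have $H$-norms controlled by $\|\phi\|_\infty$ — that upgrades the truncations $\phi_n\circ\Psi$ from $C^1(E)$ to $C^1_b(E)$ and so makes the approximation legitimate; without it one would obtain only Fr\'echet differentiability, not membership in $W^{1,p}$.
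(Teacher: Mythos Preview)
Your proof is correct and follows the same route as the paper: It\^o's formula for \eqref{e2.4}, Girsanov for the density identification, direct computation for $C^1(E)$, and the estimate $\|\nabla_H\Psi\|_H\le C_0\Psi$ together with $\Psi\in L^p$ for the Sobolev regularity. The only difference is in the final step: the paper records the bound $\|\Psi'(x)\|_{E^*}\le C_3\Psi(x)$ and then simply invokes \cite[Lemma~5.7.10]{Boga}, whereas you unpack that lemma by hand via the truncations $\phi_n\circ\Psi$ and closedness of $\nabla_H$. One small imprecision: to conclude $\Psi_n\in C^1_b(E)$ you need the \emph{$E^*$-norm} of $\Phi'(x)$ (not just $\|\nabla_H\Phi(x)\|_H$) to be uniformly bounded---but this follows from the very same computation, since $|h(1)|\le\|h\|_\infty$ and $\big|\int_0^1\phi(s)h(s)\,ds\big|\le\|\phi\|_\infty\|h\|_\infty$, so the argument is unaffected.
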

\begin{proof}
By  It\^o's formula we have 
 $$
  d(v\circ \xi)= b(\xi(t))d\xi(t) +\frac12\,b'(\xi(t) )dt = b^2(\xi(t))dt+b(\xi(t) )dB(t)+\frac12\,b'(\xi(t) )dt,
 $$     
so that
$$v(\xi(t))= \int_0^tb^2(\xi(s))ds  +\int_0^tb(\xi(s))dB(s)+\frac12\,\int_0^tb'(\xi(s))ds$$
and \eqref{e2.4} holds. Replacing in  \eqref{e2.3}, for every bounded and Borel measurable $\varphi:E\mapsto \R$   we obtain 
$$
 \int_\Omega \varphi (\xi(\cdot))\,d\P= \int_\Omega \varphi (\xi(\cdot))\,e^{v(\xi(1)) -\frac12\int_0^1b^2(\xi(s))ds -\frac12\,\int_0^1b'(\xi(s))ds}\,d\Q.  $$
Since   $\{\xi(t): \; t\in [0,1]\}$  is a Brownian motion in $(\Omega,\mathcal F,\Q)$, the law $\Q \circ \xi(\cdot)^{-1}$ of $\xi(\cdot)$ in ${\mathcal B}(E)$ 
is equal to $\P^W$. Therefore, 
% $$
% \int_\Omega \varphi (\xi(\cdot))\,d\Q = \int_{E}\varphi (x)\,\P^W(dx),
% $$
%  we get 
%     
 \begin{equation}
 \label{e2.4b}
 \int_\Omega \varphi (\xi(\cdot))\,d\P= \int_{E} \varphi (x)\,e^{v(x(1))-\frac12\int_0^1b^2(x(s))ds -\frac12\,\int_0^1b'(x(s))ds}\,\P^W(dx), 
 \end{equation}
namely, $\nu$ is absolutely continuous with respect to $\P^W$, with density $\Psi$ given by  \eqref{psi}. Notice that $|\Psi(x)| \leq C_1 \exp (C_2\|x\|_{\infty})$, for some $C_1$, $C_2>0$, and therefore $\Psi\in L^p(E, \P^W)$ for every $p\in [1, +\infty)$, although it is not bounded. 
 $\Psi$ is obviously $C^1$, with Fr\'echet derivative given by 
$$\Psi'(x)(y):= \Psi(x) \left( b(x(1))y(1) - \int_0^1 b(x(s))b'(x(s))y(s)ds - \frac12\,\int_0^1b''(x(s))y(s)ds  \right) , \quad x, \;y\in E. $$
Moreover, there is $C_3>0$ such that 
$$\| \Psi'(x)\|_{C([0,1])^*} \leq C_3 \Psi(x), \quad x\in E. $$
By \cite[Lemma 5.7.10]{Boga}, $\Psi  \in W^{1,p}( E, \P^W)$ for every $p\in [1, +\infty)$. 
\end{proof}

By the considerations at the end of subsection \ref{generalities}, $\Psi$ also belongs to $ W^{1,p}(X, N_Q)$ for every $p\in [1, +\infty)$. 

We want to use now some results of \cite{BoDaTu} that were proved under the following hypothesis, called ``local Malliavin condition".

\begin{Hypothesis}
\label{h2.1}
Let $X$ be a separable Hilbert space endowed with a nondegenerate centered Gaussian measure $\mu$, let $g\in W^{1,p}(X, \mu)$ for every $p\in [1, +\infty)$, and let $I\subset \R$ be an open interval.  Assume  that there are two random variables $U:X\mapsto X$, $\gamma:X\mapsto \R$ such that $ \langle Mg(x),U(x)  \rangle =\gamma(x)$, for a.e.  $x\in g^{-1}(I)$, $\gamma(x) \neq 0$ for a.e.  $x\in g^{-1}(I)$, and
$U/\gamma \in D(M_p^*)\quad \forall\;p\in [1, +\infty)$. 
\end{Hypothesis}

Lemma 2.2 and Proposition 3.3 of \cite{BoDaTu} yield the following propositions. 

\begin{Proposition}
\label{prop:BoDaTu1}
Let  Hypothesis \ref{h2.1} be satisfied. If $\Psi\in W^{1,q} (X, \mu)$ for some $q>1$, the function
\begin{equation}
\label{F} 
F_{\Psi}(r):= \int_{\{x:\; g(x) \geq r\} }\Psi(x) \mu(dx), 
\end{equation}
is continuously differentiable at any $r\in I$. 
\end{Proposition}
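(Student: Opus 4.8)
The plan is to deduce everything from the behaviour of the pushforward measure $(\Psi\mu)\circ g^{-1}$ on the interval $I$. Since $\Psi\in W^{1,q}(X,\mu)\subset L^1(X,\mu)$, this is a finite signed Borel measure on $\R$, and writing $I=(a,b)$ with $a\ge-\infty$, $b\le+\infty$, one has for every $r\in I$
\[
F_\Psi(r)=\big((\Psi\mu)\circ g^{-1}\big)\big([r,b)\big)+\big((\Psi\mu)\circ g^{-1}\big)\big([b,+\infty)\big),
\]
where the last term is independent of $r\in I$. Hence, if I can show that $(\Psi\mu)\circ g^{-1}$ is absolutely continuous on $I$ with a density $\kappa$ that is continuous on $I$, and that $\mu(\{g=s\})=0$ for $s\in I$, it follows that $F_\Psi(r)=\int_{[r,b)}\kappa(s)\,ds+\mathrm{const}$ on $I$, so $F_\Psi\in C^1(I)$ with $F_\Psi'=-\kappa$.

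To produce such a density I would test against $\phi\in C_c^\infty(I)$ after an integration by parts. Put $\Phi(t):=\int_{-\infty}^t\phi(s)\,ds$, a bounded $C^\infty$ function with $\Phi'=\phi$, and set $N:=U/\gamma$; by Hypothesis \ref{h2.1}, $N\in D(M_p^*)$ for every $p$ (so in particular $N\in D(M_1^*)\subset L^\infty(X;X)$ and $M^*N\in L^\infty(X,\mu)$), and $\langle Mg,N\rangle=1$ $\mu$-a.e.\ on $g^{-1}(I)$. The chain rule gives $\Phi\circ g\in W^{1,p}(X,\mu)$ for every $p<\infty$, with $M(\Phi\circ g)=\phi(g)\,Mg$. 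The crucial ingredient — this is where $\Psi\in W^{1,q}$ enters together with the membership of $N$ in all the $D(M_p^*)$, and is exactly \cite[Lemma 2.2]{BoDaTu} — is that the vector field $\Psi N$ belongs to $D(M_{q'}^*)$, with $q'$ conjugate to $q$ (finite since $q>1$), and obeys the Leibniz-type identity
\[
M^*(\Psi N)=\Psi\,M^*N-\langle M\Psi,N\rangle=:h\in L^q(X,\mu).
\]
Applying the duality $\int_X\langle M_{q'}u,V\rangle\,d\mu=\int_X u\,M_{q'}^*V\,d\mu$ with $u=\Phi\circ g\in W^{1,q'}$ and $V=\Psi N$, and using that $\phi(g)\langle Mg,N\rangle=\phi(g)$ $\mu$-a.e.\ on $X$ (this holds on $g^{-1}(I)$ because $\langle Mg,N\rangle=1$ there, and elsewhere because $\phi\circ g=0$ since $\mathrm{supp}\,\phi\subset I$), one obtains
\[
\int_X\phi(g)\,\Psi\,d\mu=\int_X(\Phi\circ g)\,h\,d\mu .
\]

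Writing $\Phi(g(x))=\int_\R\phi(s)\,\mathbf 1_{\{g\ge s\}}(x)\,ds$ and applying Fubini (legitimate since $\|\phi\|_{L^1}\|h\|_{L^1}<\infty$), the right-hand side equals $\int_\R\phi(s)\,\kappa(s)\,ds$, where
\[
\kappa(s):=\int_{\{g\ge s\}}h\,d\mu=\int_{\{g\ge s\}}\Big(\Psi\,M^*(U/\gamma)-\langle M\Psi,U/\gamma\rangle\Big)\,d\mu .
\]
As $\kappa$ is bounded by $\|h\|_{L^1(X,\mu)}$, this identity, valid for every $\phi\in C_c^\infty(I)$, identifies $\kappa$ as the density of $(\Psi\mu)\circ g^{-1}$ on $I$. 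Running the same computation with $\Psi$ replaced by the constant $1$ (which lies in $\bigcap_p W^{1,p}$, with $M1=0$) shows that $\mu\circ g^{-1}$ has the density $s\mapsto\int_{\{g\ge s\}}M^*(U/\gamma)\,d\mu$ on $I$; in particular $\mu(\{g=s\})=0$ for every $s\in I$. Hence, if $s_n\to s$ in $I$ then $\mathbf 1_{\{g\ge s_n\}}\to\mathbf 1_{\{g\ge s\}}$ $\mu$-a.e., and since $|h|\in L^1(X,\mu)$ dominated convergence gives $\kappa(s_n)\to\kappa(s)$, so $\kappa$ is continuous on $I$. Together with the reduction step this yields $F_\Psi\in C^1(I)$.

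The main obstacle in this plan is the Leibniz-type statement $\Psi N\in D(M_{q'}^*)$ with $M^*(\Psi N)=\Psi M^*N-\langle M\Psi,N\rangle$: with $\Psi$ only of class $W^{1,q}$ and $N=U/\gamma$ merely in $\bigcap_p D(M_p^*)$, this cannot be obtained by a naive smooth approximation and is precisely the technical core supplied by \cite[Lemma 2.2]{BoDaTu}; the chain rule for $\Phi\circ g$, the use of Fubini, and the dominated convergence argument are then routine.
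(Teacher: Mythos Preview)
Your argument is correct. Note, however, that the paper itself does not give a proof of this proposition: it simply states that it follows from Lemma~2.2 and Proposition~3.3 of \cite{BoDaTu}. What you have written is essentially a faithful reconstruction of that cited argument---the integration-by-parts identity obtained by pairing $\Phi\circ g$ with the vector field $\Psi\,U/\gamma$, the Leibniz formula $M^*(\Psi N)=\Psi\,M^*N-\langle M\Psi,N\rangle$ (which you rightly identify as the technical core, Lemma~2.2 of \cite{BoDaTu}), the Fubini step producing the density $\kappa$, and the dominated-convergence argument for continuity of $\kappa$ once one knows $\mu(\{g=s\})=0$. Your use of the case $p=1$ in the hypothesis $U/\gamma\in D(M_p^*)$ to get $N,\,M^*N\in L^\infty$, and hence $h=\Psi M^*N-\langle M\Psi,N\rangle\in L^q\subset L^1$, is exactly what makes the pairing with $\Phi\circ g\in W^{1,q'}$ legitimate. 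There is nothing to correct; you have filled in what the paper delegated to the reference.
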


\begin{Proposition}
\label{prop:BoDaTu2} Let $X= L^2(0,1)$ be endowed with the Wiener measure $ N_Q$. Then  the function $h(x):= \esssup_{0\leq s\leq 1} x(s)$ satisfies Hypothesis \ref{h2.1} in every interval $I\subset (0, +\infty)$. 
\end{Proposition}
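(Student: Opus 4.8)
The plan is to verify Hypothesis \ref{h2.1} for $g=h$ and $\mu=N_Q$ with an explicit choice of $U,\gamma$ built from the Malliavin derivative of $h$. First I would record the regularity of $h$. Since $|h(x)-h(x+k)|\le\|k\|_\infty\le\|k\|_{H_X}$ for $k\in H_X$ (because $H_X=\{x\in H^1(0,1):x(0)=0\}$ embeds into $C([0,1])$ with constant $1$), the function $h$ is $H_X$‑Lipschitz, hence $h\in W^{1,p}(X,N_Q)$ for every $p$. For $N_Q$‑a.e. $x$ the continuous path $x$ attains its maximum at a single point $\tau(x)\in[0,1]$, so $h'(x)(k)=k(\tau(x))$ for $k\in H_X$, i.e. $\nabla_{H}h(x)=\min(\,\cdot\,,\tau(x))$ and $M_ph(x)=Q^{-1/2}\min(\,\cdot\,,\tau(x))$, with
\[
\|M_ph(x)\|_X^2=\|\min(\,\cdot\,,\tau(x))\|_{H_X}^2=\int_0^1\one_{[0,\tau(x)]}(s)\,ds=\tau(x).
\]
On $h^{-1}(I)$ with $I\subset(0,+\infty)$ one has $h(x)>0=x(0)$, so $\tau(x)\in(0,1]$ and $\|M_ph(x)\|_X>0$ there.

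Fix $I\subset(0,+\infty)$; we may assume $I$ bounded, and choose $\chi\in C^\infty$ with $\operatorname{supp}\chi$ a compact subset of $(0,+\infty)$ and $\chi\equiv1$ on a neighbourhood of $\overline I$. Set
\[
U(x):=\chi(h(x))\,M_ph(x),\qquad \gamma(x):=\|M_ph(x)\|_X^2=\tau(x).
\]
Then $\langle M_ph(x),U(x)\rangle=\chi(h(x))\|M_ph(x)\|_X^2=\gamma(x)$ for a.e. $x\in h^{-1}(I)$, $\gamma\neq0$ a.e. on $h^{-1}(I)$, and $U/\gamma=\chi(h)\,M_ph/\|M_ph\|_X^2=:\Theta$. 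For the integrability of $\Theta$, let $a>0$ with $\operatorname{supp}\chi\subset[a,+\infty)$: if the Brownian maximum is attained before time $\eps$ and is $\ge a$, then $\max_{[0,\eps]}W\ge a$, whence $\P(\tau\le\eps,\ h\ge a)\le\P(\max_{[0,\eps]}W\ge a)\le e^{-a^2/2\eps}$; integrating this bound against the distribution function of $\tau$ on $\{h\ge a\}$ gives $\E[\tau^{-q}\one_{\{h\ge a\}}]<\infty$ for every $q$, so $\Theta\in L^p(X,N_Q;X)$ for all $p\in[1,+\infty)$.

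The substantial step is to show $\Theta\in D(M_p^*)$ for every $p$, i.e. that $\int_X\langle M_qu,\Theta\rangle\,dN_Q=\int_X u\,\psi\,dN_Q$ for all $u\in W^{1,q}(X,N_Q)$, with a function $\psi\in\bigcap_{q'}L^{q'}$. Using $M_ph(x)=Q^{-1/2}\min(\,\cdot\,,\tau(x))$ and the identity $\sum_k\lambda_k e_k(\tau)^2=(Q\delta_\tau)(\tau)=\tau$ (with $Qe_k=\lambda_k e_k$), a formal computation of the Gaussian divergence yields the natural candidate
\[
\psi(x)=\chi(h(x))\,\frac{h(x)}{\|M_ph(x)\|_X^2}-\chi'(h(x)),
\]
which lies in all $L^{q'}$ by the estimate above. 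To make this rigorous one cannot differentiate $\tau=\|M_ph\|_X^2$ directly, since $\tau$ is not weakly differentiable (along each Cameron–Martin line it is a pure jump, "staircase"-type function: small deterministic perturbations do not move the argmax). I would instead proceed either (a) by the carré du champ: writing $\langle M_qu,M_ph\rangle_X=\Gamma(u,h)$ and $\varphi:=\chi(h)/\|M_ph\|_X^2$, integration by parts against the Ornstein–Uhlenbeck operator gives $\int\langle M_qu,\Theta\rangle\,dN_Q=-\int u\,[\varphi\,Lh+\Gamma(\varphi,h)]\,dN_Q$, and one must show that the singular part of $Lh$ — which is the local time of the Brownian path at its maximum — exactly cancels the singular part of $\Gamma(\varphi,h)$ coming from $\nabla_H\tau$, leaving precisely $-[\varphi Lh+\Gamma(\varphi,h)]=\psi$; or (b) by approximation, replacing $h$ by the smooth nondegenerate functionals $h_\eps(x)=\eps\log\int_0^1 e^{x(s)/\eps}\,ds$, applying the classical Malliavin integration‑by‑parts formula to $\chi(h_\eps)M_qh_\eps/\|M_qh_\eps\|_X^2$, and letting $\eps\to0$ after checking $M_qh_\eps\to M_qh$ in $L^q(X;X)$ and (via the uniform bound $\P(\tau\le\eps',\ h_\eps\ge a)\le e^{-ca^2/\eps'}$) that $\|M_qh_\eps\|_X^{-1}\one_{\{h_\eps\ge a\}}$ is bounded in every $L^q$.

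The hard part, in either route, is controlling the second–order contributions: in (b) the Hessian of $h_\eps$ blows up like $1/\eps$ (because $h$ is "corner‑like"), and one must show these terms organize into the clean expression $\psi$ with no leftover singular mass; equivalently, in (a), that the staircase behaviour of $\tau$ is invisible to the integration by parts. The key fact that makes this work is that the argmax of the Brownian path is a.s. unique and, more precisely, that the set of paths possessing two tied global maxima carries no surface mass — conditionally on $x(a)=x(b)$ the Brownian bridge on $[a,b]$ a.s. overshoots their common level — so that although $\tau$ jumps along lines, these jumps occur on a set that does not contribute to $M_q^*\Theta$. Once $M_q^*\Theta=\psi\in\bigcap_{q'}L^{q'}$ is established, $\Theta\in D(M_p^*)$ for every $p$, and therefore $h$ satisfies Hypothesis \ref{h2.1} on every interval $I\subset(0,+\infty)$, as claimed.
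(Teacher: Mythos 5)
Your setup is correct and matches the standard treatment: $h$ is $H$-Lipschitz, hence in every $W^{1,p}(X,N_Q)$; by uniqueness of the argmax, $\nabla_H h(x)=\min(\cdot,\tau(x))$, so $\|M_ph(x)\|_X^2=\tau(x)$; the choice $U=\chi(h)M_ph$, $\gamma=\tau$ gives $\langle M_ph,U\rangle=\gamma\neq 0$ a.e.\ on $h^{-1}(I)$; and the tail bound $\P(\tau\le\eps,\,h\ge a)\le 2e^{-a^2/2\eps}$ does give $\tau^{-1}\one_{\{h\ge a\}}\in\bigcap_q L^q$. But the proposal does not prove the proposition, because the one condition in Hypothesis \ref{h2.1} that carries all the content — $U/\gamma\in D(M_p^*)$ for every $p$ — is left as a program. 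Integrability of $U/\gamma$ in every $L^p$ is necessary but nowhere near sufficient for membership in the domain of the adjoint: you must exhibit $\psi\in\bigcap_q L^q$ with $\int_X\langle M_q u,U/\gamma\rangle\,dN_Q=\int_X u\,\psi\,dN_Q$ for all $u\in W^{1,q}$, and your text explicitly defers this ("one must show that the singular part of $Lh$ \dots exactly cancels", "one must show these terms organize into the clean expression $\psi$ with no leftover singular mass"). Both of your routes stall exactly where the difficulty lives: in (a) you would need to make sense of $Lh$ and of the distributional $H$-derivative of $\tau$ as measures and prove a cancellation of their singular parts, none of which is established; in (b) the Hessian of $h_\eps$ blows up like $1/\eps$ and it is precisely the possible survival of a singular limit from the trace term that has to be excluded — asserting that the a.s.\ uniqueness of the argmax makes the jumps of $\tau$ "invisible" is a heuristic, not an argument (uniqueness of the argmax for $N_Q$-a.e.\ path does not by itself control what happens on the exceptional set under integration by parts). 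So there is a genuine gap at the crux of the statement.

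For comparison: the paper does not prove this proposition at all — it imports it verbatim from \cite{BoDaTu} (Proposition 3.3 there, together with Lemma 2.2), where the verification that $\one_{[0,\tau]}/\tau$ lies in $D(M_p^*)$ is carried out using the explicit structure of the law of the Brownian maximum and its argmax. Your objects $U$, $\gamma$ coincide (up to the harmless cutoff $\chi$) with those of \cite{BoDaTu}, so what you have written is an accurate account of \emph{what must be proved} and \emph{why it is delicate}, together with a correct candidate for $M^*(U/\gamma)$; to turn it into a proof you would either have to reproduce the computation of \cite{BoDaTu} or complete one of your two approximation schemes, including the convergence $M_qh_\eps\to M_qh$ in $L^q(X;X)$, uniform negative moments of $\|M_qh_\eps\|_X$ on $\{h_\eps\ge a\}$, and — the essential point — the identification of the limit of the second-order terms.
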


Propositions \ref{prop:2.1}, \ref{prop:BoDaTu1}, \ref{prop:BoDaTu2} are the tools to prove Theorem \ref{t1.1}. 

\vspace{5mm}
\noindent
{\em Proof of Theorem  \ref{t1.1}.}
Let $X= L^2(0,1)$ be endowed with the Wiener measure $N_Q$, and set
$$g(x) := \essinf x, \quad x\in X. $$
Since $g(x) = - h(-x)$ with  $h(x) = \esssup_{0\leq s\leq 1} x(s)$, $g$ satisfies Hypothesis \ref{h2.1}  in every interval $I\subset (-\infty, 0)$, by Proposition  \ref{prop:BoDaTu2}. Applying Proposition \ref{prop:BoDaTu1} to the constant function $\Psi \equiv 1$, we obtain that the level sets $\{x\in X:\, g(x) =r\}$ are $N_Q$-negligible for every $r\in \R$. Applying it to the
function $\Psi$ defined in \eqref{psi}, we obtain that  
$$F_{\Psi}(r) = \int_{\{x: \, g(x) \geq r\}} \Psi(x) N_Q(dx)$$
is continuously differentiable in $(-\infty, 0)$. Therefore,   for $a<b<0$ we have $F_{\Psi}(a) - F_{\Psi}(b)= \int_{\{x: \, g(x) \in [a,b)\}} \Psi(x) N_Q(dx)
= \int_{\{x: \, g(x) \in [a,b]\}} \Psi(x) N_Q(dx)$. 
  
On the other hand, by Proposition \ref{prop:2.1}  for $a<b<0$ we have
$$\int_{\{\omega\in \Omega:\, m(\omega) \in [a,b]\}} d\P = \int_{\{x\in E:\, g(x) \in [a, b]\}}\Psi(x) \P^W(dx)$$
and therefore 
$$\int_{\{\omega\in \Omega:\, m(\omega) \in [a,b]\}}  d\P = \int_{\{x\in X:\, g(x)\in [a, b]\}}\Psi(x) N_Q(dx) = F_{\Psi}(a) - F_{\Psi}(b) = - \int_a^b F_{\Psi}'(r)dr . $$
This implies that  the law of $m$ is absolutely continuous with respect to the Lebesgue measure in $(-\infty, 0)$, with continuous density $-F_{\Psi}' $.

\section{An application to geometric measure theory}

Fixed any $r\in \R$, we consider here the sets 
$$O_r:= \{ x\in L^2(0,1): \; \essinf x >r\}, \quad O_{r } \cap E := \{ x\in C([0,1]): \;  \min x >r\}. $$
They have different topological properties, since $O_{r } \cap E$ is open in $E$ while $O_r$ is neither open or closed  and it has empty interior  in $X$. 
However they differ by a $N_Q$-negligible set, so that  they have the same measure theoretic properties as far as the measures $N_Q$ and $\nu :=\Psi N_Q$ are considered. We are going to make this sentence more precise, dealing with perimeters.

\subsection{Notation and basic results}
The notion of perimeter and of perimeter measure is well known for Gaussian centered nondegenerate measures in separable Banach spaces (\cite{F,FH,AMMP})
and it has been extended more recently to more general classes of differentiable measures, see e.g.  \cite{AmDaGoPa,RZZ1,BR,BPR,RZZ2,DPL}. Here we use the definitions and some results  of \cite{DPL}, restricted to Gaussian and weighted Gaussian measures that is the case under consideration here.

Let $\mu$ be a centered  Gaussian measure in a separable Hilbert space $X$, and let $w$ be a nonnegative weight function, such that 
\begin{equation}
\label{w}
w, \; \log w \in \bigcap_{p>1}W^{1,p}(X, \mu). 
\end{equation}
The weighted measure $\nu$ is defined as 
\begin{equation}
\label{nu}
\nu(dx) = w(x)\mu(dx). 
\end{equation}
The operator $Q^{1/2}\nabla : C^1_b(X)\subset L^p(X, \nu)\mapsto L^p(X, \nu;X)$ is closable, as an unbounded operator in $L^p(X, \nu)$, for every $p\in [1, +\infty)$. Its closure is denoted by $M_{p,\nu}$, and the domain of the closure is denoted by $W^{1,p}(X, \nu)$. For every $z\in X$ and $\varphi\in C^1_b(X)$, the integration formula 
\begin{equation}
\label{parti_introd}
 \int_X \langle M_{p, \nu}u, z\rangle \varphi  \,d\nu = -  \int_X u \langle Q^{1/2}\nabla \varphi,z \rangle + \int_Xu\, v_z\varphi\,d\nu 
 \end{equation}
holds, with $v_z(x) = (Q^{1/2}z)^{\land}(x) + M_{p,\mu}(\log w)$.  By \eqref{w}, $v_z\in L^p(X, \nu)$ for every $p\in [1, +\infty)$, although it is not bounded. (For more details and further properties, see \cite{TAMS,Simone}).  

By \eqref{parti_introd}, good vector fields with finite dimensional range, $F(x)= \sum_{i=1}^n f_i(x) z_i$ with $f_i\in C^1_b(X)$ and $z_i\in X$, belong to the domain of the adjoint operator $M^*_{p,\nu}$ for every $p>1$, and $M^*_{p,\nu}F(x) = - \sum_{i=1}^n (\langle Q^{1/2}\nabla f_i,z_i \rangle - v_{z_i}f_i)$ is independent of $p$. We denote by $\widetilde{C}^{1 }_b(X, X)$ the set of all such vector fields, and by $M^*F$ the common value of $M^*_{p,\nu}F$ for all $p>1$. 
The total variation of any $u\in L^q(X, \nu)$ with $q>1$ is defined by 
\begin{equation}
\label{V(u)} 
V(u)  :=   \sup \bigg\{ \int_X u \,M^* F \, d\nu: \; F\in \widetilde{C}^1_b(X ,X), \; \|F(x)\| \leq 1\; \forall x\in X\bigg\}
\end{equation}
The condition  $V(u)<+\infty$ is equivalent to the existence of  a Borel $X$-valued vector measure $m$ such that, setting $m_z(B) := \langle m(B), z\rangle$ for every $z\in X$ and for every Borel set $B\subset X$, we have 
\begin{equation}
\label{defBVintro}
 \int_X u (\langle Q^{1/2}\nabla \varphi,z \rangle - v_z\varphi)\,d\nu= - \int_X  \varphi  \,dm_z, \quad z\in X, \;\varphi\in C^1_b(X).
 \end{equation}
If $u$ satisfies one of such equivalent conditions, we say that it has bounded variation, and we write $u\in BV(X,\nu)$. 
If  for some Borel set $B$ the function $u= \one_B$ has bounded variation, the total variation measure $|m|$ is called perimeter measure and $|m|(X)$ is called perimeter of $B$. 

Notice that if two Borel sets $B_1$, $B_2$ differ by a negligible set (namely, $\nu (B_1\setminus B_2) = \nu(B_2\setminus B_1) =0$), $\one_{B_1}$ is of bounded variation if and only if $\one_{B_2}$ is of bounded variation, and in this case the perimeter measures of $B_1$ and of $B_2$ do coincide. 
If $\nu(B) =0$, or $\nu(B)=1$, we have $V(\one_B)=0$ and the vector measure  $m $ is trivial, $m\equiv 0$. So, the notion of perimeter is meaningful only if $\nu(B)\in (0,1)$. 

Of course we can take $w\equiv 1$ in \eqref{w}, \eqref{nu}. With this choice, the notion of bounded variation function with respect to the Gaussian measure $\mu$ coincides with the one of \cite{F,AMMP}, although different notations are used. We recall that if $X$ is a separable Banach space endowed with a centered non-degenerate Gaussian measure $\mu$, the definition of total variation of \cite{F,AMMP} is 
\begin{equation}
\label{tildeV(u)} 
\widetilde{V}_{X,\mu}(u)  :=   \sup \bigg\{ \int_X u \, \text{div}_\mu \widetilde{F} \, d\nu: \;  \widetilde{F}\in \widetilde{C}^1_b(X ,H), \; \|F(x)\| _H\leq 1\; \forall x\in X\bigg\}
\end{equation}
where the space $\widetilde{C}^1_b(X ,H)$ consists of the vector fields $\widetilde{F}$ of the type $\widetilde{F}= \sum_{i=1}^n f_i(x) h_i$ with $f_i\in C^1_b(X)$ and $h_i\in H$, for some $n\in \N$.  The Gaussian divergence div$_{\mu}$ is still defined by duality: given a vector field $\Phi\in L^1(X, \mu; H)$, a function $\beta\in L^1(X, \mu)$ is called Gaussian divergence of $\Phi$, and denoted by  div$_{\mu}\Phi$, if 
$$\int_X \langle \nabla_H \varphi, \Phi\rangle_Hd\mu = - \int_X \varphi\,\beta \,d\mu, \quad \varphi\in C^1_b(X). $$
In the case that $X$ is a Hilbert space, we have  $\widetilde{F}\in \widetilde{C}^1_b(X ,H)$ iff $F= Q^{-1/2}\widetilde{F}\in \widetilde{C}^1_b(X ,X)$, and the Gaussian divergence div$_{\mu}\widetilde{F}$ is equal to $-M^*_{p, \mu}F$ for every $p>1$. Therefore, $V(u) = \widetilde{V}_{X,\nu}(u) $ for every $u\in L^q(X, \mu)$ with $q>1$. 
See \cite{DPL}.

\subsection{Perimeters of the sets $O_r$}

As in Section 2, we set here $X= L^2(0,1)$, $E=C([0,1])$, $H=\{ f\in H^1(0,1):\; h(0) =0\}$. $X$ and $E$ are endowed with the Wiener measure, and $H$ is their common Cameron-Martin space.   

By \eqref{e1.3}, $\P^W(\{ x\in E: \, \min(x) \geq 0\} )= N_Q(\{ x\in X: \, \essinf (x) \geq 0\} ) =0$, and therefore $\nu(\{ x\in X: \, \essinf (x) \geq 0\} ) =0$. Therefore we consider the sets $O_r$, $O_r\cap E$ only for $r<0$. 

\vspace{3mm}

As far as the perimeter of $O_r\cap E$ with respect to $\P^W $ is concerned, we recall that the general theory of BV functions in Banach spaces endowed with Gaussian measures has been developed only for nondegenerate Gaussian measures. We already remarked that $\P^W $ is degenerate in $E$, while its restriction to $E_0= \{ f\in E: \; f(0)=0\}$ is non degenerate. This is why in the next considerations we replace $E$ by $E_0$. 

$O_r\cap E_0$ is a convex open set in $E_0$. Since $\P^W$ is a nondegenerate centered Gaussian measure in $E_0$, by \cite[Prop.4.2]{CLMN} we have $\P^W (\partial O_r) =\P^W\{x\in E_0: \,\min x =r\} =0$, and  the perimeter (with respect to $\P^W$, in the sense of \cite{F,FH,AMMP})  of $O_r\cap E_0$ is finite.  The same argument cannot be applied to $O_r$ (considered as a subset of $X$ with the Wiener measure) because $O_r$ has empty interior part in $X$.  
However, since $E_0$ is continuously embedded in $X$, every $F\in C^1_b(X, H)$ belongs also to $C^1_b(E_0, H)$ (to be more precise: the restriction to $E_0$ of any $F\in C^1_b(X, H)$ belongs  to $C^1_b(E_0, H)$), and since $E_0$ is dense in $X$, $\sup_{x\in X}\|F(x)\|_H = \sup_{x\in E_0}\|F(x)\|_H $. Therefore, 
$\widetilde{V}_{X,N_Q}(u) \leq \widetilde{V}_{E_0, \P^W}(u) $ for every $u\in L^q(X, N_Q) = L^q(E_0, \P^W)$. In particular, taking $u = \one_{O_r}$, we obtain that $O_r$ has finite perimeter with respect to $\nu$. 
An expression of the related integration formula \eqref{defBVintro} along suitable $z$ may be found in \cite{BZ}. 

\vspace{5mm}

Let us consider now  the measure $\nu$. We shall prove that the sets $O_r$ have finite perimeter with respect to $\nu$; to this aim we use the next lemma. 

\begin{Lemma}
The function $g(x):= \essinf x$ belongs to $W^{1,p}(X, \nu)$ for every $p\in [1, +\infty)$. The set of all $x\in E$ that have a unique minimum point has full measure, and for every $x\in E$ having a unique minimum point $\tau_x$ we have
\begin{equation}
\label{eq:min}
M_{p, \nu}g (x) =  \one_{[0, \tau_x]}. 
\end{equation}
\end{Lemma}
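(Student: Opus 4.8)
The claim has two parts that should be handled in sequence. First I would establish that $g \in W^{1,p}(X,\nu)$ for every $p \in [1,\infty)$, and second I would identify the gradient $M_{p,\nu}g$ explicitly on the full-measure set of $x \in E$ having a unique minimizer.

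For the Sobolev regularity, the natural route is to exhibit $g$ as a limit of smooth cylindrical (or at least $C^1_b$) functions together with a uniform bound on the $L^p(X,\nu)$-norms of their $Q^{1/2}\nabla$. One convenient choice is to approximate $g(x) = \essinf x$ by the regularized minima $g_n(x) := -\tfrac1n \log\left(\int_0^1 e^{-n x(s)}\,ds\right)$, which are Fr\'echet differentiable on $E$ with $g_n'(x)(y) = \int_0^1 \rho_n^x(s)\, y(s)\,ds$, where $\rho_n^x(s) = e^{-n x(s)}/\int_0^1 e^{-n x(r)}\,dr$ is a probability density on $[0,1]$. Hence $\nabla g_n(x) \in X$ is this probability density, so $\|\nabla g_n(x)\|_{X} = \|\rho_n^x\|_{L^2(0,1)}$; this is \emph{not} uniformly bounded, so one should instead work with $Q^{1/2}\nabla g_n$ and note that $Q^{1/2}$ maps $L^1(0,1)$ (in particular, probability densities) boundedly into $H \hookrightarrow C([0,1])$, with $\|Q^{1/2}\rho\|_H = \|\rho\|_{L^2}$ — so this does not immediately help either. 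The cleaner observation is that $g$ is $1$-Lipschitz from $(X,\|\cdot\|_X)$, hence also from $(H,\|\cdot\|_H)$ since $\|h\|_X \le C\|h\|_H$, so by the standard criterion (e.g. \cite[Thm.~5.11.2]{Boga} adapted to the weighted measure $\nu$, whose weight $w=\Psi$ and $\log w$ lie in all $W^{1,p}$) one gets $g \in W^{1,p}(X,\nu)$ for every $p$, with $\|\nabla_H g\|_H \le C$ a.e. The weighted case reduces to the Gaussian one because $\nu = \Psi\,N_Q$ with $\Psi,\log\Psi \in \bigcap_{p>1}W^{1,p}(X,N_Q)$, which is exactly condition \eqref{w}; alternatively one cites \cite{DPL} directly. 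I would spell out the Lipschitz bound and invoke the weighted Sobolev criterion.

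For the identification of the gradient, the key point is that at a point $x\in E$ with a \emph{unique} minimizer $\tau_x$, the function $t \mapsto \min_s x(s)$ is Hadamard-differentiable along $E$ in the direction $y$ with derivative $y(\tau_x)$ — this is the classical Danskin/envelope theorem for the minimum of a continuous family, valid precisely because the argmin is a singleton. Thus for $x$ in this full-measure set and $h\in H$, $\tfrac{d}{d\varepsilon}\big|_{\varepsilon=0} g(x+\varepsilon h) = h(\tau_x) = \langle h, \one_{[0,\tau_x]}\rangle_H$, since for $h\in H=\{h\in H^1(0,1): h(0)=0\}$ one has $\langle h,\one_{[0,\tau_x]}\rangle_H = \int_0^1 h'(s)\one_{[0,\tau_x]}(s)\,ds = h(\tau_x)$. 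Hence the weak gradient $\nabla_{H,\nu} g(x)$ equals $\one_{[0,\tau_x]} \in H$, and translating through $\nabla_H u = Q^{1/2}M_{p,\nu}u$ — using $(Q^{1/2}\one_{[0,\tau_x]})(t) = \int_0^1 \min\{t,s\}\one_{[0,\tau_x]}(s)\,ds$, whose "second derivative" recovers $\one_{[0,\tau_x]}$, consistently with $Q^{1/2}M_{p,\nu}g = \nabla_{H,\nu}g$ — gives $M_{p,\nu}g(x) = \one_{[0,\tau_x]}$, which is \eqref{eq:min}. To make the directional-derivative computation rigorous as a statement about the \emph{weak} gradient of the $W^{1,p}$ function $g$, I would use the approximants $g_n$ above: one checks $g_n \to g$ in $L^p(X,\nu)$ (uniform convergence on $E$, dominated by the integrability of $\Psi$) and that $\rho_n^x \to \delta_{\tau_x}$ weakly-$*$ for each fixed $x$ with a unique minimizer, so $Q^{1/2}\nabla g_n(x) \to Q^{1/2}\one_{[0,\tau_x]}$ pointwise in $H$ with a uniform bound, whence by closedness of $M_{p,\nu}$ the limit is the gradient of $g$.

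\textbf{Main obstacle.} The delicate point is proving that the set of $x\in E$ with a \emph{unique} minimizer has full $\nu$-measure (equivalently full $N_Q$-measure, since $\nu\ll N_Q\ll\nu$). For the Wiener measure this is classical (the argmin of Brownian motion on $[0,1]$ is a.s. unique — indeed it has the arcsine-type law), and it follows for instance from the fact, essentially contained in Proposition~\ref{prop:BoDaTu2} and the proof of Theorem~\ref{t1.1}, that the level sets $\{x: g(x)=r\}$ are $N_Q$-negligible together with a Markov/strong-Markov decomposition argument at an independent fixed time; alternatively one may cite a known result on Brownian argmin uniqueness. I expect this measure-theoretic uniqueness statement, and the careful passage from pointwise Hadamard differentiability to the genuine $W^{1,p}$ weak gradient via the closedness of $M_{p,\nu}$ and the approximants $g_n$, to be where the real work lies; the Sobolev membership itself is routine once the Lipschitz bound and \eqref{w} are in hand.
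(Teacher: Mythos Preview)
Your outline is a genuinely different route from the paper's proof, and the overall strategy (Lipschitz $\Rightarrow$ Sobolev membership; Danskin-type differentiation at unique-minimizer points; soft-min approximants $g_n$ to pass to the weak gradient) is sound in spirit. The paper, by contrast, gives a three-line proof by citation: it recalls from \cite[Prop.~3.2]{BoDaTu} that $h(x)=\esssup x$ lies in every $W^{1,p}(X,N_Q)$ with $M_{p,N_Q}h(x)=\one_{[0,\eta_x]}$ at unique-max points, reflects via $g(x)=-h(-x)$ to get the same for $g$ with $\tau_x$ in place of $\eta_x$, and then invokes \cite[Prop.~3.5]{Simone} (using that $\Psi$ and $\log\Psi$ are in every $W^{1,p}(X,N_Q)$) to transfer membership and gradient from $N_Q$ to $\nu$ with $M_{p,\nu}g=M_{p,N_Q}g$. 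The uniqueness of the Brownian minimizer is simply quoted as well known. So the paper outsources exactly the two steps you identify as the hard ones.

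That said, your write-up contains two concrete errors you should fix before it can stand on its own. First, $g(x)=\essinf x$ is \emph{not} $1$-Lipschitz on $X=L^2(0,1)$: take $x\equiv 0$ and $y=-\one_{[0,\varepsilon]}$, so $\|x-y\|_X=\sqrt{\varepsilon}$ while $g(x)-g(y)=1$. What is true, and what you actually need, is $H$-Lipschitzness: for $x\in X$ and $h\in H$ one has $|g(x+h)-g(x)|\le\|h\|_\infty\le\|h\|_H$, since $H$ embeds in $C([0,1])$. This is enough to invoke the Gaussian criterion and then pass to $\nu$ via the weighted-Sobolev lemma you cite. Second, your identification of the gradient is internally inconsistent: you assert both $\nabla_{H,\nu}g(x)=\one_{[0,\tau_x]}$ and $M_{p,\nu}g(x)=\one_{[0,\tau_x]}$ together with $\nabla_H u=Q^{1/2}M_{p,\nu}u$, which would force $Q^{1/2}\one_{[0,\tau_x]}=\one_{[0,\tau_x]}$. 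Moreover $\one_{[0,\tau_x]}\notin H$ (it is discontinuous and does not vanish at $0$), so the ``inner product'' $\langle h,\one_{[0,\tau_x]}\rangle_H$ is not defined; the correct $H$-gradient is $\nabla_H g(x)=\min\{\cdot,\tau_x\}$, whose $H$-pairing with $h$ is $\int_0^1 h'(s)\one_{[0,\tau_x]}(s)\,ds=h(\tau_x)$. The formula $M_{p,\nu}g(x)=\one_{[0,\tau_x]}$ is the $X$-valued object, and you should derive it from $\nabla_H g(x)=\min\{\cdot,\tau_x\}$ using whatever explicit realization of $Q^{1/2}$ (or the isometry $V\colon z\mapsto\int_0^{\cdot}z$) is fixed by the conventions of \cite{BoDaTu}, rather than conflating the two gradients.
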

\begin{proof}  
Setting $h(x)= \esssup x$, we already know that $h\in W^{1,p}(X, N_Q)$ for every $p\geq 1$, and so does $g$ since $g(x)= -h(-x)$. 

It is well known that $N_Q(\{ x\in E: \; x$ has a unique maximum point$\} ) =1$; moreover, 
   for every $x\in E$ having a unique maximum point $\eta_x$ we have $M_{p,N_Q}h(x) =  \one_{[0, \eta_x]}$, (e.g.,  \cite[Prop. 3.2]{BoDaTu}). It follows  that $g\in W^{1,p}(X, N_Q)$ for every $p\geq 1$, and for every $x\in E$ having a unique maximum point $\tau_x$ we have $M_{p,N_Q}g(x) =  \one_{[0, \tau_x]}$. 
 
We recall that $\nu = \Psi N_Q$, where the weight $\Psi$ belongs to $W^{1,p}(X, N_Q)$ for every $p\in [1, +\infty)$, and also its logarithm
$$\log \Psi (x) = v(x(1)) -\frac12\int_0^1b^2(x(s))ds -\frac12\,\int_0^1b'(x(s))ds$$
does. By \cite[Prop. 3.5]{Simone}, $g\in W^{1,p}(X, \nu)$ for every $p$, and $M_{p,\nu} g(x)  =M_{p,N_Q}g(x)$, for $\nu$-a.e. $x\in X$. The statement follows. 
\end{proof}

% \begin{Lemma}
% For every $r\in \R$ and $F\in  \widetilde{C}^1_b(X ,X)$ we have 
%
% \begin{equation}
%\label{eq:limite}
% \int_{\{x\in X: \,g(x) \geq r\}} M^*_{p,\nu}F\,d\nu = \lim_{\eps \to 0^+} \frac{1}{\eps} \int_{ \{x\in X: \,r < g(x) \geq r+\eps \}} \langle M_{p,\nu} g, F\rangle\,d\nu
% \end{equation}
%
% \end{Lemma}
% \begin{proof} The 
Now we are ready to prove the main result of this section.

\begin{Proposition}
For every $r<0$, $O_r$ has finite $\nu$-perimeter. 
\end{Proposition}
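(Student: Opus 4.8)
The plan is to combine the co-area-type estimate coming from the differentiability of $F_\Psi$ with the structure of $g$ as a Sobolev function, in the spirit of the Gaussian case but now with the weight $\Psi$. Concretely, I would show that $\one_{O_r}$ has bounded variation with respect to $\nu$ by exhibiting, for every $z\in X$, a finite signed measure $m_z$ satisfying the integration-by-parts identity \eqref{defBVintro}. Since $O_r = \{x:\ g(x)>r\}$ and $\nu$-a.e.\ $x$ has a unique minimum point, the heuristic is that the perimeter of $O_r$ is supported on the level set $\{g=r\}$ and the normal direction is $M_{p,\nu}g(x) = \one_{[0,\tau_x]}$ normalized; the role of $F_\Psi$ is to guarantee that the total mass one gets is finite, namely $|m|(X) = -F_\Psi'(r)\,/\,(\text{something})$ up to the normalizing factors.

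First I would record the ingredients already available: by the Lemma, $g\in W^{1,p}(X,\nu)$ for all $p$ with $M_{p,\nu}g(x)=\one_{[0,\tau_x]}$; by Proposition~\ref{prop:2.1} and the remark after it, $\Psi$ and $\log\Psi$ lie in $\bigcap_{p>1}W^{1,p}(X,N_Q)$, so $\nu=\Psi N_Q$ is an admissible weighted Gaussian measure in the sense of \eqref{w}--\eqref{nu}; by Propositions~\ref{prop:BoDaTu2} and \ref{prop:BoDaTu1}, $g$ satisfies Hypothesis~\ref{h2.1} on every $I\subset(-\infty,0)$, the level sets $\{g=r\}$ are $N_Q$-negligible (hence $\nu$-negligible) for all $r$, and $F_\Psi(r)=\int_{\{g\ge r\}}\Psi\,dN_Q$ is $C^1$ on $(-\infty,0)$. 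Then I would approximate $\one_{O_r}$ by $u_\eps:=\theta_\eps(g)$ where $\theta_\eps$ is a smooth nondecreasing cutoff with $\theta_\eps\to\one_{(r,\infty)}$. Using the chain rule in $W^{1,p}(X,\nu)$ one gets $M_{p,\nu}u_\eps = \theta_\eps'(g)\,\one_{[0,\tau_\cdot]}$, and for any $F\in\widetilde C^1_b(X,X)$ with $\|F\|\le 1$, the definition of $M^*$ gives $\int_X u_\eps M^*F\,d\nu = \int_X \langle M_{p,\nu}u_\eps,F\rangle\,d\nu = \int_X \theta_\eps'(g(x))\,\langle \one_{[0,\tau_x]},F(x)\rangle\,d\nu$, whose absolute value is bounded by $\int_X \theta_\eps'(g(x))\,\|\one_{[0,\tau_x]}\|_{L^2}\,d\nu \le \int_X \theta_\eps'(g(x))\,d\nu$ since $\|\one_{[0,\tau_x]}\|_{L^2(0,1)}\le 1$. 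Finally $\int_X \theta_\eps'(g(x))\,d\nu = \int_{\R}\theta_\eps'(s)\,dF_\Psi^-(s)$ where I write the push-forward of $\nu$ under $g$; because $F_\Psi$ is $C^1$ near $r$ this converges to $|F_\Psi'(r)|$ as $\eps\to 0$. Passing to the sup over $F$ and then letting $\eps\to0$ (using $u_\eps\to\one_{O_r}$ in $L^q(X,\nu)$ and lower semicontinuity of $V$) yields $V(\one_{O_r})\le |F_\Psi'(r)|<+\infty$, i.e.\ $O_r\in BV(X,\nu)$ with finite perimeter.

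I expect the main obstacle to be the rigorous handling of the chain rule and the limit $\eps\to0$ for $g$, which is only Sobolev and not $C^1$: one must justify $M_{p,\nu}(\theta_\eps\circ g) = (\theta_\eps'\circ g)\,M_{p,\nu}g$ (a Lipschitz-chain-rule statement in the weighted Sobolev space, available from \cite{Simone} or by the closability of $M_{p,\nu}$ together with approximation of $g$ by smooth cylindrical functions), and one must make precise the identity $\int_X\phi(g(x))\,d\nu = -\int_\R \phi(s)\,F_\Psi'(s)\,ds$ for $\phi\in C_c((-\infty,0))$, which is exactly the content of Proposition~\ref{prop:BoDaTu1} (the absolute continuity of $g_\#\nu$ on $(-\infty,0)$ with density $-F_\Psi'$) combined with the negligibility of the level sets. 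Once these two points are in place the estimate is routine. A cleaner alternative, which I would actually prefer to write, is to invoke directly the coarea/BV criterion for weighted Gaussian measures in \cite{DPL}: if $g\in W^{1,p}(X,\nu)$ for all $p$, $g_\#\nu$ has a locally bounded density on $I$, and the level sets $\{g=r\}$ are $\nu$-negligible, then $\{g>r\}$ has finite $\nu$-perimeter for a.e.\ $r\in I$ — and in our situation, since the density $-F_\Psi'$ is continuous, this holds for every $r\in(-\infty,0)$, which gives the Proposition. I would state the argument via the approximation estimate above so as to keep the paper self-contained, relegating the chain-rule justification to the cited references.
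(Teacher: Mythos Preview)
Your proposal is correct and follows essentially the same route as the paper: approximate $\one_{O_r}$ by $\theta_\eps\circ g$, apply the chain rule in $W^{1,p}(X,\nu)$, use the duality $\int u_\eps\,M^*F\,d\nu=\int\langle M_{p,\nu}u_\eps,F\rangle\,d\nu$ together with $\|M_{p,\nu}g\|\le 1$, and control the resulting integral via the $C^1$ regularity of $F_\Psi$ at $r$. The only cosmetic differences are that the paper takes $\theta_\eps$ piecewise linear (so $\theta_\eps'=\eps^{-1}\one_{[r,r+\eps]}$ and the bound is exactly $\eps^{-1}\nu(g^{-1}([r,r+\eps]))\to -F_\Psi'(r)$) and passes to the limit $\eps\to 0$ for each fixed $F$ by dominated convergence before taking the sup, thereby avoiding any appeal to lower semicontinuity of $V$.
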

\begin{proof}
Let us estimate $V(\one_{O_r})$. We claim that for every $F\in  \widetilde{C}^1_b(X ,X)$ we have
\begin{equation}
\label{eq:limite}
\int_{O_r} M^* F\,d\nu = \lim_{\eps \to 0^+} \frac{1}{\eps} \int_{ g^{-1}([r,r+\eps])} \langle M  g, F\rangle\,d\nu, 
\end{equation}
where we set $Mg =  M_{p,\nu}g$ for every $p\geq 1$. 
Once \eqref{eq:limite} is established,  we estimate its right hand side recalling that by \eqref{eq:min} we have $\|M g(x)\|\leq 1$ for $\nu$-a.e.$x$, and therefore
$$\left|  \frac{1}{\eps}\int_{ g^{-1}([r,r+\eps])} \langle M  g, F\rangle\,d\nu \right| \leq  \frac{1}{\eps}\, \nu( g^{-1}([r,r+\eps])) \|F\|_{L^{\infty}(X, X)}.$$
On the other hand, 
$$\frac{1}{\eps} \,\nu( g^{-1}([r,r+\eps]) )= \frac{1}{\eps} \int_{g^{-1}([r,r+\eps]) }\Psi(x) N_Q(dx)$$
has finite limit $l$ as $\eps \to 0$, by Proposition  \ref{prop:BoDaTu1} (using the notation of Prop.  \ref{prop:BoDaTu1}, we have $l=-F_{\Psi}'(r)$). Therefore, 
$$\int_{O_r} M^* F\,d\nu \leq l \|F\|_{L^{\infty}(X, X)}, $$
which implies that $V(\one_{O_r})  <+\infty$.

To prove that  \eqref{eq:limite} holds we use an argument taken from \cite[Prop. 3.8]{Lincei}, that we reproduce here adapting it to the present context. For $\eps >0$ we use the approximation of 
 $\one_{[r, +\infty)}$ given by 
$$
\theta_\eps (\xi)=\left\{\begin{array}{ll}
0, & \xi\le r\\
(\xi-r)/\eps,
& \xi\in [r ,r+\eps ]\\
1, &  \xi\ge r+\eps 
\end{array}\right.
$$
Then $\theta_\epsilon \circ g\in W^{1,p}(X, \nu)$ for every $p\in [1, +\infty)$, and
\begin{equation}
\label{eq:comp}
M_{p,\nu}(\theta_\epsilon \circ g) = (\theta_\epsilon' \circ g) \,M_{p,\nu}g = \frac{1}{\eps} \one_{g^{-1}([r, r+\eps])}M_{p,\nu}g, 
\end{equation}
see e.g. \cite[Lemma 2.2]{TAMS} (notice that the sets $g^{-1}(r)$ and $g^{-1}(r+\eps)$ are $N_Q$-negligible and therefore $\nu$-negligible).

Now, for every $F\in D(M^*_{p,\nu})$ and for every $\varphi \in W^{1,p}(X, \nu)$ we have 
$$\int_X \langle M_{p,\nu}\varphi, F\rangle d\nu = \int_X \varphi\, M^*_{p,\nu}F\,d\nu . $$
Taking $F\in  \widetilde{C}^1_b(X ,X)$,  $\varphi = \theta_\epsilon \circ g$, and recalling \eqref{eq:comp} we get 
$$ \frac{1}{\epsilon}\int_{g^{-1}([r, r+\eps])}\langle M g,F  \rangle\, d\nu=\int_{X} (\theta_\epsilon \circ g)\,M^*(F)\,d\nu.  $$
As    $\epsilon\to 0$ the right hand side converges to $\int_{g^{-1}([r, +\infty))}  M^*(F)\,d\nu = \int_{O_r} M^*(F)\,d\nu$, and so does the left hand side. Therefore,  \eqref{eq:limite} holds. 
\end{proof}

\section{Acknowledgements}
The first author would like to thank the Isaac Newton Institute for Mathematical Sciences for support and hospitality 
during the programme ``Scaling limits, rough paths, quantum field theory'' when part of the work on this paper was undertaken.
This work was supported by EPSRC Grant Number EP/R014604/1 and by the PRIN research project  2015233N54 ``Deterministic and stochastic evolution equations" .

\end{document}